\newtheorem{theorem}{Theorem}
\theoremstyle{plain}
\newtheorem{lemma}[theorem]{Lemma}
\theoremstyle{definition}
\newtheorem*{definition}{Definition}
\newtheorem*{remark}{Remark}
\newcommand{\ab}{\mathrm{ab}}
\newcommand{\C}{\mathcal{C}}
\newcommand{\Cf}{\textrm{cf.}\;}
\newcommand{\cs}{\mathrm{cs}}
\newcommand{\CX}{\C_X}
\newcommand{\fg}{\pi_1}
\newcommand{\fgX}{\fg(X)}
\newcommand{\fgXbar}{\fg(\Xbar)}
\newcommand{\fgXab}{\fgX^{\ab}}
\newcommand{\fgXbarabcs}{\fgXbar^{\ab}_{\cs}}
\newcommand{\fgXag}{\fgX^{\ab,\geo}}
\newcommand{\fgXabcs}{\fgXab_{\cs}}
\newcommand{\geo}{\mathrm{geo}}
\newcommand{\I}{\mathcal{I}}
\renewcommand{\Im}{\operatorname{Im}}
\newcommand{\inj}{\hookrightarrow}
\newcommand{\isom}{\onto{\sim}}
\newcommand{\IX}{\I_X}
\renewcommand{\O}{\mathcal{O}}
\newcommand{\OXbarx}{\O_{\Xbar,x}}
\newcommand{\OXbarxh}{\O_{\Xbar,x}^h}
\newcommand{\ol}[1]{\overline{#1}}
\newcommand{\onto}[1]{\stackrel{#1}{\to}}
\newcommand{\opcit}{\textit{op.\,cit.}}
\newcommand{\Q}{\mathbb{Q}}
\newcommand{\Qp}{\mathbb{Q}_{p}}
\newcommand{\QZ}{\Q/\Z}
\newcommand{\ssm}{\smallsetminus}
\newcommand{\Spec}{\operatorname{Spec}}
\newcommand{\tor}{\mathrm{tor}}
\newcommand{\Xbar}{\,\ol{\!X\!}\,}
\newcommand{\Xinf}{X_{\infty}}
\newcommand{\Z}{\mathbb{Z}}
\newcommand{\Zhat}{\widehat{\Z}}
\def\sn{\smallskip\noindent}
\def\cprime{$'$} \def\cprime{$'$}
\title{Class field theory for curves over $p$-adic fields}
\author{Toshiro Hiranouchi\footnote{
 Supported by the JSPS Fellowships for Young Scientists.
}}
\begin{document}

\pagenumbering{arabic}
\maketitle


\begin{abstract}
We develop class field theory of curves over $p$-adic fields 
which extends the unramified theory of S.~Saito \cite{SSaito85b}. 
The class groups 
which approximate abelian \'etale fundamental groups of such curves 
are introduced 
in the terms of algebraic $K$-groups 
by imitating G.~Wiesend's class group 
for curves over finite fields \cite{Wiesend07}. 
\end{abstract}

\section{Introduction}
\label{Introduction}
Let $X$ be a regular curve over a finite field $k$ 
with function field $K$,  
$\Xbar$ the regular compactification of $X$, 
that is the regular and proper curve which 
contains $X$ as an open subvariety, and 
$\Xinf$ the finite set of closed points in 
the boundary $\Xbar\ssm X$ of $X$. 
Class field theory describes 
the abelian \'etale fundamental group $\fgXab$ of $X$
by a topological abelian group $\CX$ 
which is called the class group. 
In terms of (Milnor) $K$-groups, 
the group $\CX$ 
is the cokernel of the map
$$
  K_1(K) \to \bigoplus_{x\in X_0}K_0(k(x)) \oplus \bigoplus_{x\in \Xinf}K_1(K_x)
$$
induced by 
the inclusion $K\inj K_x$ and 
the boundary map $K_1(K_x) \to K_0(k(x))$, 
where $k(x)$  is the residue field at $x$, 
$K_x$ is the completion of $K$ at $x$ 
and $X_0$ is the set of closed points in $X$ 
(\Cf \cite{Wiesend07}). 
The reciprocity map $\rho_X:\CX \to \fgXab$ 
is defined by 
class field theory of finite fields,  
local class field theory and 
the reciprocity law. 
It has dense image and 
the kernel is the connected component of $0$ in $\CX$. 

The aim of this note is 
to develop 
class field theory for curves over {\it local fields}. 
Here, a local field 
means a complete discrete valuation field with finite residue field. 
Let 
$X$  be a regular curve over a local field $k$ with function field $K$.
The {\it class group} $\CX$ of $X$ 
is defined to be the cokernel
of the homomorphism
$$
  K_2(K) \to \bigoplus_{x\in X_0} K_1(k(x)) \oplus \bigoplus_{x\in \Xinf}K_2(K_x),
$$ 
induced by 
the inclusion  $K \inj K_x$ and  
the boundary map $K_2(K_x) \to K_1(k(x))$ 
(see Def.~\ref{def:class group} for the precise definition). 
Note that 
the residue field $k(x)$ at $x\in X_0$  
is a local field, and  
$K_x$ is a $2$-dimensional local field 
in the sense of K.~Kato, 
that is 
a complete discrete valuation field 
whose residue field is a local field. 
Next, a canonical continuous homomorphism 
$\sigma_X : \CX \to \fgXab$ 
shall be defined by 
local class field theory, 
$2$-dimensional local class field theory \cite{Kato79} and 
the reciprocity law due to S.~Saito \cite{SSaito85b}. 
Our main result is the following 
determination of its kernel and cokernel 
when the characteristic of $k$ is $0$.

\begin{theorem} 
\label{thm:main}
Let $X$  be a 
regular and geometrically connected 
curve over a finite extension $k$ of $\Qp$. 

\sn
$\mathrm{(i)}$  The kernel of $\sigma_X$ 
is the maximal divisible subgroup of  $\CX$.
%

\sn
$\mathrm{(ii)}$ 
The quotient of  $\fgXab$ by the topological closure 
$\ol{\Im(\sigma_X)}$
of the image of $\sigma_X$ is isomorphic to  $\Zhat^{r}$ 
with some  $r\ge 0$.
\end{theorem}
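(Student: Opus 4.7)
The strategy is to compare $\sigma_X$ with two classical reciprocity maps fitted into a single commutative diagram: S.~Saito's reciprocity $\sigma_{\Xbar}$ for the regular compactification \cite{SSaito85b}, and Kato's $2$-dimensional local reciprocity $\rho_x \colon K_2(K_x) \to G_{K_x}^{\ab}$ at each boundary point $x\in\Xinf$ \cite{Kato79}. Let $\partial_x \colon K_2(K_x) \to K_1(k(x))$ denote the tame symbol and $I_x \subset \fgXab$ the inertia at $x$. Using surjectivity of $\partial_x$ together with the definition of $\CX$, one obtains a right-exact sequence $\bigoplus_{x\in\Xinf}\ker(\partial_x) \to \CX \to \CXbar \to 0$; dually, the open immersion $X\inj\Xbar$ yields $\bigoplus_{x\in\Xinf} I_x \to \fgXab \to \fgXbarab \to 0$. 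Saito's reciprocity, together with the fact from \cite{Kato79} that $\rho_x$ carries $\ker(\partial_x)$ into $I_x$, assembles these into a commutative diagram
$$
\begin{CD}
\bigoplus_{x\in\Xinf}\ker(\partial_x) @>>> \CX @>>> \CXbar @>>> 0 \\
@VVV @VV{\sigma_X}V @VV{\sigma_{\Xbar}}V @. \\
\bigoplus_{x\in\Xinf} I_x @>>> \fgXab @>>> \fgXbarab @>>> 0,
\end{CD}
$$
whose left vertical is induced by the restrictions of the $\rho_x$.

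For (i), \cite{Kato79} identifies $\ker(\rho_x)$ (and a fortiori the kernel of its restriction to $\ker(\partial_x)$) with the maximal divisible subgroup, while \cite{SSaito85b} identifies $\ker(\sigma_{\Xbar})$ with the maximal divisible subgroup of $\CXbar$. A snake lemma chase in the above diagram then shows that $\ker(\sigma_X)$ is divisible, since divisibility is stable under direct sums, extensions, and quotients. The reverse inclusion is immediate: $\fgXab$ is profinite and Hausdorff, hence contains no nonzero divisible subgroup, so every divisible subgroup of $\CX$ automatically lies in $\ker(\sigma_X)$.

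For (ii), the cokernel portion of the snake lemma produces an exact sequence
$$
\operatorname{coker}(\oplus_x\rho_x) \to \operatorname{coker}(\sigma_X) \to \operatorname{coker}(\sigma_{\Xbar}) \to 0.
$$
By \cite{Kato79} each $\rho_x|_{\ker(\partial_x)}$ has dense image in $I_x$, so passing to topological closures in $\fgXab$ kills the left-hand term and produces an isomorphism $\fgXab/\ol{\Im(\sigma_X)} \isom \fgXbarab/\ol{\Im(\sigma_{\Xbar})}$. By \cite{SSaito85b} the right-hand side is isomorphic to $\Zhat^r$ for some $r\ge 0$.

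The essential technical content lies in establishing the diagram: verifying exactness of both rows, proving that $\rho_x$ carries $\ker(\partial_x)$ into $I_x$, and above all checking commutativity of the right-hand square --- this last step being a global reciprocity-law compatibility between $\sigma_X$ and the local reciprocity maps $\rho_x$, which requires the kind of careful analysis already present in Saito's original work. Once these compatibilities are in hand, both (i) and (ii) are straightforward diagram chases.
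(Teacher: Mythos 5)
Your treatment of (ii) is workable and close in effect to the paper's, which instead passes through the completely split quotient $\fgXabcs=\fgXbarabcs\simeq\Zhat^r$ of Saito: surjectivity of $\CX\to\CXbar$, density of the image of each $K_2(K_x)$ in the corresponding decomposition group, and Saito's computation for $\Xbar$ do give $\fgXab/\ol{\Im(\sigma_X)}\simeq\fgXbarab/\ol{\Im(\sigma_{\Xbar})}\simeq\Zhat^r$ --- provided you work with topological closures throughout, since the bottom row of your diagram is only topologically exact at $\fgXab$ (the kernel of $\fgXab\to\fgXbarab$ is the \emph{closed} subgroup generated by the $I_x$, not the image of $\bigoplus_x I_x$).

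For (i) there are genuine gaps, and this is where the content of the theorem lies. First, the snake lemma does not apply: neither left-hand horizontal arrow is injective and the bottom row is not algebraically exact at $\fgXab$, so the sequence of kernels need not be exact at $\ker(\sigma_X)$ and the connecting map is undefined. Second, the kernel of your left vertical arrow is not a local object: Kato controls $\ker\bigl(K_2(K_x)\to G_{K_x}^{\ab}\bigr)$, but your map is composed with the highly non-injective map $G_{K_x}^{\ab}\to\fgXab$, and its kernel contains every element of $\ker(\partial_x)$ whose local reciprocity image dies globally; nothing in \cite{Kato79} identifies that with a divisible group. Third, and most decisively, even granting an exact sequence $D\to\ker(\sigma_X)\to\ker(\sigma_{\Xbar})$ with $D$ divisible, the image of $\ker(\sigma_X)$ in $\ker(\sigma_{\Xbar})$ is merely a \emph{subgroup} of a divisible group, and subgroups of divisible groups need not be divisible (e.g.\ $\Z\subset\Q$); so divisibility of $\ker(\sigma_X)$ does not follow from stability under sums, extensions and quotients. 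What is needed is precisely the mod-$n$ injectivity $\CX/n\inj\fgXab/n$ for every $n$, which the paper proves in Lemma~\ref{lem:inj} via duality for \'etale cohomology with compact support, excision and purity at the two kinds of points, and the Merkur{\cprime}ev--Suslin theorem; this yields $\ker(\sigma_X)\subseteq\bigcap_n n\CX$, and the finiteness statement of Lemma~\ref{lem:tau} then upgrades this to genuine divisibility following Saito's Theorem~5.1. Some substitute for that mod-$n$ injectivity is unavoidable and is the ingredient your d\'evissage is missing. (Your remark that a profinite abelian group has no nonzero divisible subgroups, giving the inclusion of the maximal divisible subgroup into $\ker(\sigma_X)$, is correct.)
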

  Further assume that the variety $X$ is proper.  
  In this case, 
  the class group $\CX$ is nothing other than $SK_1(X)$.  
  By using this,  
  S.~Saito \cite{SSaito85b}
  showed the above theorem 
  and it plays an important role 
  in higher dimensional class field theory 
  of K.~Kato and S.~Saito. 
  The invariant $r$ in the above theorem 
  is called the {\it rank}\/ of the compactification $\Xbar$ of $X$
  (\opcit, Def.~2.5). 
  It depends on the type of the reduction of $\Xbar$. 
  In particular, we have $r=0$  if it has potentially good reduction. 
\begin{remark}
%
  As in \opcit, 
  for a local field $k$ with characteristic $p>0$, 
  the theorem above can be proved 
  with restriction to ``the prime-to-$p$ part'' 
  in the assertion (i).
\end{remark}

After introducing the class group of $X$ 
and the reciprocity map in Section~\ref{sec:class groups}, 
we shall prove Theorem~\ref{thm:main} 
in Section~\ref{sec:proof}.
 
Throughout this paper, 
a {\it curve}\/ over a field is 
an integral separated scheme of finite type over the field 
of dimension 1.  
For an abelian group $A$,  
we denote by $A/n$ 
the cokernel of the map $n : A \to A$ 
defined by $x\mapsto nx$ for any positive integer $n$. 

\medskip\noindent
{\it Acknowledgments.} 
The author is indebted to  
Shuji Saito for his encouragement 
during author's visit to the University of Tokyo
and helpful suggestions on 
the proof of Theorem~\ref{thm:main}. 

\section{Class Groups}
\label{sec:class groups}
Let 
$X$ be a regular curve over a local field $k$ with function field $K$,  
$\Xbar$ the regular compactification of $X$, and 
$\Xinf$ the finite set of closed points in the boundary $\Xbar\ssm X$ of $X$.
We define a group  $\IX$  by 
$$
\IX = \bigoplus_{x \in X_0} K_1(k(x)) 
    \oplus \bigoplus_{x\in \Xinf}K_2(K_x).
$$
The topology of $K_2(K_x)$ is defined in \cite{Kato79} 
(\Cf \opcit, I, Sect.~7). 
In particular, if the characteristic of $k$ is $0$, 
we take the discrete topologies of
$K_x^{\times}$  and $K_2(K_x)$. 
The group $\IX$ is endowed with the direct sum topology, 
that is, 
a subset is open if the intersection with each finite partial sum is open. 

\begin{definition}
\label{def:class group}
Define the {\it class group}
$\CX$  associated with  $X$  by the cokernel of 
the natural map $K_2(K) \to \IX$  
which is 
defined by the boundary map $K_2(K_x) \to K_1(k(x))$ 
and the inclusion $K \inj K_x$. 
The quotient topology makes this an abelian topological group. 
\end{definition}

The {\it reciprocity map} 
$$
  \sigma_X : \CX \to \fgXab
$$
is defined as follows: 
For $x \in X_0$, 
the reciprocity map of local class field theory 
$K_1(k(x)) \to \fg(x)^{\ab}$ and 
the natural map $\fg(x)^{\ab} \to \fgXab$ give 
$K_1(k(x)) \to \fgXab$. 
For any $x \in \Xinf$, 
the reciprocity map of 
2-dimensional local class field theory 
$K_2(K_x) \to \fg(\Spec(K_x))^{\ab}$   
and 
the natural map $\fg(\Spec(K_x))^{\ab} \to \fgXab$ 
define a map $K_2(K_x) \to \fgXab$. 
Thus, we have $\IX \to \fgXab$. 
Finally, 
the recirprocity law of $K$
(\cite{SSaito85b}, Chap.~II, Prop.~1.2)
and 2-dimensional local class field theory 
(\opcit, Chap.~II, Th.~3.1) 
show that 
the homomorphism $\IX\to \fgXab$ defined above 
factors through  $\CX$. 
Thus the required homomorphism $\sigma_X: \CX \to \fgXab$ is obtained. 

The structure map  $X\to \Spec(k)$  induces a map 
$N:\CX \to k^{\times}$ 
which is defined by norms over $k$ 
and one denotes the kernel of this map by  $V(X)$. 
It makes the following diagram commutative:
\begin{equation}
\label{eq:VX}
\vcenter{
\xymatrix{
  0 \ar[r] & V(X) \ar[r]\ar@{-->}^{\tau_X}[d] & \CX \ar[r]^N\ar[d]^{\sigma_X} & k^{\times}\ar@<-1mm>[d]^{\rho_k} \\
  0 \ar[r] & \fgXag \ar[r] & \fgXab \ar[r] & \fg(\Spec(k))^{\ab}.
}
}
\end{equation}
Here, 
the group $\fgXag$ is defined by the exactness of the lower horizontal row.
\begin{remark}
  As in \cite{Wiesend07}, 
  we can define a class group and a reciprocity map for 
  a regular {\it variety}\/ over the local field $k$. 
  More generally, 
  for a regular variety over a {\it higher dimensional local field}, 
  a class group may be defined as an abstract group 
  by using Milnor $K$-groups of higher degree. 
  However, there is no appropriate topology 
  in the $K$-groups for degree $>2$. 
\end{remark}

\section{Proof of the Theorem}
\label{sec:proof}
In this section, we shall prove Theorem~\ref{thm:main}. 
We denote by $\fgXabcs$  the quotient 
of $\fgXab$  which classifies the abelian covers of $X$  
which are completely split. 
The assertion (ii) 
is reduced to the unramified case 
(\cite{SSaito85b}, Chap.~II, Prop.~2.2, Th.~2.4) 
as follows: 
$$
  \fgXab/\ol{\Im(\sigma_X)} \simeq \fgXabcs = \fgXbarabcs \simeq \Zhat^r, 
$$
where $r$ is the rank of $\Xbar$.

The lemma below 
is used in the proof of the assertion (i) 
in an auxiliary role.
\begin{lemma}
  \label{lem:tau}
  $\mathrm{(i)}$ The image of $\tau_X$  is finite.
  
  \sn
  $\mathrm{(ii)}$ The cokernel of $\tau_X$ is 
  isomorphic to $\Zhat^r$.
\end{lemma}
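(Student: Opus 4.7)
The plan is to derive both parts from the snake lemma applied to diagram~\eqref{eq:VX}, combined with the main theorem~(ii) (already established above) and a structural input on $V(X)$ from the proper case. The key ingredient from local class field theory in characteristic zero is that $\rho_k\colon k^\times\to G_k^{\ab}$ is injective (since $k^\times$ carries the discrete topology) and has dense image in the profinite group $G_k^{\ab}$. Moreover, the image of $N\colon\CX\to k^\times$ is of finite index, since for each $x\in X_0$ the local norm $N_{k(x)/k}(k(x)^\times)$ is an open subgroup of finite index in $k^\times$.

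For assertion~(i), I would appeal to the structure theorem of S.~Saito \cite{SSaito85b} for $SK_1$ of proper curves over $p$-adic fields, which asserts that $SK_1(\Xbar)$ is an extension of a finite group by a divisible subgroup. A parallel statement should hold for $V(X)$, since $V(X)$ is related to $V(\Xbar)=SK_1(\Xbar)$ through $K_2$-contributions from $\Xinf$ that are themselves essentially divisible. Because $\fgXab$ is profinite and a profinite abelian group in characteristic zero admits no nontrivial divisible subgroup, the divisible part of $V(X)$ lies in $\ker\tau_X$, and hence $\Im\tau_X$ is the image of a finite quotient of $V(X)$, so it is finite.

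For assertion~(ii), I would combine (i) with the main theorem~(ii). Since $\rho_k\circ N$ has dense image in $G_k^{\ab}$ and $\fgXab$ is compact, the projection $\fgXab\to G_k^{\ab}$ carries $\overline{\Im\sigma_X}$ onto all of $G_k^{\ab}$; equivalently, $\fgXab=\fgXag+\overline{\Im\sigma_X}$. Consequently,
\[
\fgXag/\bigl(\fgXag\cap\overline{\Im\sigma_X}\bigr)\;\isom\;\fgXab/\overline{\Im\sigma_X}\;\cong\;\Zhat^r.
\]
Since $\Im\tau_X=\fgXag\cap\Im\sigma_X$, and since by (i) $\Im\tau_X$ is finite and hence already closed, the desired conclusion $\mathrm{coker}\,\tau_X\cong\Zhat^r$ reduces to the equality $\fgXag\cap\overline{\Im\sigma_X}=\Im\tau_X$.

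The main obstacle is precisely this final identification. A convergent sequence $\sigma_X(y_n)\to a\in\fgXag$ only forces $\rho_k(N(y_n))\to 0$ in $G_k^{\ab}$, not that $N(y_n)$ tends to zero in the discrete group $k^\times$, so one must adjust the $y_n$'s by suitable elements of $\CX$ whose $\sigma_X$-image tends to zero while cancelling the norms. I expect this to be carried out by exploiting a $0$-cycle of finite degree on $X$ (producing a near-splitting of $N$) together with the explicit topological description of $\rho_k$ from local class field theory.
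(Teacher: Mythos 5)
Your proposal for part (i) rests on the claim that $V(X)$ is finite modulo its maximal divisible subgroup, imported from Saito's structure theorem for the proper curve via the assertion that the $K_2(K_x)$-contributions from $x\in\Xinf$ are ``essentially divisible''. That assertion is false: writing $K_x\simeq k(x)((\pi))$, the boundary map $K_2(K_x)\to k(x)^\times$ is a split surjection which annihilates every divisible subgroup, so $K_2(K_x)$ modulo its maximal divisible subgroup surjects onto $k(x)^\times\simeq\Z\times\Z_p^d\times(\text{finite})$, an infinite group. Consequently no soft comparison with $SK_1(\Xbar)$ yields the structure of $V(X)$; in fact it is not clear (and the paper never needs) that $V(X)$ is finite modulo its maximal divisible subgroup --- the theorem only gives finiteness of $V(X)$ modulo $V(X)\cap\ker\sigma_X$, which a priori is larger than $V(X)_{\mathrm{div}}$. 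In effect you are assuming on the source side the kind of ``finite modulo divisible'' statement that the whole paper is laboring to prove. The paper instead works entirely on the target side: it first deduces from diagram~(\ref{eq:VX}) and the already-established assertion (ii) of the main theorem that $\fgXag/\ol{\Im(\tau_X)}\simeq\fgXab/\ol{\Im(\sigma_X)}\simeq\Zhat^r$, and then shows that $\fgXag$ itself sits in an exact sequence $0\to\fgXag_{\tor}\to\fgXag\to\Zhat^r\to 0$ with $\fgXag_{\tor}$ finite, using Yoshida's finiteness theorem together with the localization sequence $0\to H^1(\Xbar,\QZ)\to H^1(X,\QZ)\to\bigoplus_{x\in\Xinf}H^2_x(\Xbar,\QZ)$ and the finiteness of $H^2_x(\Xbar,\QZ)\simeq H^0(k(x),\QZ(-1))$. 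A $\Zhat$-rank count then forces $\ol{\Im(\tau_X)}$ to have rank $0$, hence to be finite, and both assertions of the lemma drop out simultaneously.

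Your part (ii) is also incomplete on its own terms: you correctly isolate $\fgXag\cap\ol{\Im(\sigma_X)}=\ol{\Im(\tau_X)}$ as the crux but leave it unproved, and the surjectivity of $\ol{\Im(\sigma_X)}\to\Gkab$ does not follow from the two facts you cite --- density of $\rho_k(k^\times)$ plus finite index of $N(\CX)$ in $k^\times$ is compatible with $N(\CX)$ being a norm subgroup $N_{L/k}L^\times$, whose image under $\rho_k$ is far from dense. Reversing your order of deduction as the paper does --- establish $\fgXag/\ol{\Im(\tau_X)}\simeq\Zhat^r$ first and extract finiteness of the image from the structure of $\fgXag$ --- removes the need both for the structural claim on $V(X)$ and for the approximation argument you sketch at the end.
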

\begin{proof}
Since the map $N: \CX \to k^{\times}$ 
is induced by norms over $k$, 
its image
is finite index in $k^{\times}$. 
Thus, the commutative diagram (\ref{eq:VX}) 
implies $\fgXab/\ol{\Im(\sigma_X)} \simeq \fgXag/\ol{\Im(\tau_X)}$. 
There is an exact sequence of \'etale cohomology groups 
$$
 0 \to H^1(\Xbar,\QZ) \to H^1(X,\QZ) \to \bigoplus_{x\in \Xinf}H^2_x(\Xbar,\QZ)
$$
and an isomorphism $H^2_x(\Xbar,\QZ)\simeq H^0(k(x),\QZ(-1))$ of finite groups. 
The abelian \'etale fundamental group has the description 
($\fg(X)^{\ab})^{\ast} \simeq H^1(X,\QZ)$, where  
the superscript ``$\ast$''  denotes the Pontrjagin dual.
Thus, Theorem~1 in \cite{Yoshida03} and the above exact sequence 
imply the following description of $\fgXag$: 
$$
  0 \to \fgXag_{\tor} \to \fgXag \to \Zhat^r \to 0,
$$
where the torsion subgroup  $\fgXag_{\tor}$  of 
$\fgXag$  is finite
(Note that, the rank of  $\Xbar$  
is the rank of the special fiber 
of the N\'eron model of the Jacobian variety of $\Xbar$, 
\Cf \cite{SSaito85b}, Chap.~II, Th.~6.2). 
Since the quotient group 
$\fgXag/\ol{\Im(\tau_X)}$ and 
$\fgXag$ are $\Zhat$-modules of rank $r$, 
the image of $\tau_X$ is finite. 
The assertions (i) and (ii) follows from it. 
\end{proof}

If we assume the following lemma, then 
the rest of the proof of the assertion (i) in Theorem~\ref{thm:main} 
is essentially 
the same as in the proof of Theorem~5.1 in Chapter~II of \cite{SSaito85b}
(by using Lem.~\ref{lem:tau}).

\begin{lemma}
\label{lem:inj}
Let  $n$  be a positive integer. 
Then the map  $\sigma_X:\CX \to \fgXab$  induces the injection
$$
  \CX/n \inj \fgXab/n.
$$
\end{lemma}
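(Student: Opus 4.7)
The plan is to pass to étale cohomology via the Merkurjev--Suslin (Bloch--Kato) theorem and conclude using local duality together with a global-to-local exact sequence of Kato type.

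First, the right-exact sequence $K_2(K) \to \IX \to \CX \to 0$ identifies $\CX/n$ with the cokernel of $K_2(K)/n \to \IX/n$. Since $\mathrm{char}\,k = 0$, the Merkurjev--Suslin theorem furnishes canonical isomorphisms $K_2(F)/n \cong H^2(F,\mu_n^{\otimes 2})$ and $K_1(F)/n \cong H^1(F,\mu_n)$ for each field $F \in \{K, K_x, k(x)\}$ appearing, and under these the $K$-theoretic boundary $K_2(K_x) \to K_1(k(x))$ corresponds to the cohomological residue. Moreover, local class field theory for $k(x)$ and Kato's 2-dimensional local class field theory for $K_x$ yield isomorphisms $K_1(k(x))/n \isom \fg(x)^{\ab}/n$ and $K_2(K_x)/n \isom \fg(\Spec(K_x))^{\ab}/n$.

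Since both $\CX/n$ and $\fgXab/n$ are finite, injectivity of $\sigma_X$ mod $n$ is equivalent, by Pontryagin duality, to surjectivity of the dual map $H^1(X,\Z/n) = \mathrm{Hom}(\fgXab,\Z/n) \to \mathrm{Hom}(\CX,\Z/n)$. Using the identifications just made, together with local Tate duality for $k(x)$ and Kato's local duality for $K_x$, the target $\mathrm{Hom}(\CX,\Z/n)$ is identified with the subgroup of $\bigoplus_{x\in X_0} H^1(k(x),\Z/n) \oplus \bigoplus_{x\in\Xinf} H^1(K_x,\Z/n)$ consisting of those tuples whose evaluation on $K_2(K)$ (via the residue/restriction maps and the local duality pairings) vanishes. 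This is precisely the reciprocity-compatible subgroup.

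The required surjectivity then follows from a Kato/Bloch--Ogus exact sequence relating $H^1(X,\Z/n)$ to the Galois cohomology of the function field $K$ and its completions at points of $\Xbar$: any reciprocity-compatible local tuple lifts to a global character of $\fg(K)^{\ab}$ unramified on $X$, hence to an element of $H^1(X,\Z/n)$, where the matching of restriction at each closed point with the prescribed local component is ensured by the reciprocity law for $K$ (\cite{SSaito85b}, Chap.~II, Prop.~1.2) invoked in the construction of $\sigma_X$.

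The main obstacle I expect is checking the compatibilities of all these identifications with $\sigma_X$: in particular, that the Merkurjev--Suslin isomorphism intertwines the $K$-theoretic boundary $K_2(K_x) \to K_1(k(x))$ with the cohomological residue $H^2(K_x,\mu_n^{\otimes 2}) \to H^1(k(x),\mu_n)$, and that at each $x \in \Xinf$ Kato's 2-dimensional local duality matches the local component of $\sigma_X$ with the perfect pairing $H^2(K_x,\mu_n^{\otimes 2}) \times H^1(K_x,\Z/n) \to \Z/n$. These compatibilities are implicit in Kato's and Saito's work, but they must be carefully unwound here; granting them, the argument is essentially a diagram chase.
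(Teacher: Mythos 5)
Your overall strategy --- dualize, identify $\mathrm{Hom}(\CX,\Z/n)$ with a group of local characters via Merkurjev--Suslin and local dualities, and then prove surjectivity of the restriction map from $H^1(X,\Z/n)$ --- is a legitimate Pontryagin-dual reformulation of what the paper does, but the crucial step is asserted rather than proved. The surjectivity of $H^1(X,\Z/n)\to\mathrm{Hom}(\CX,\Z/n)$ onto the ``reciprocity-compatible subgroup'' is exactly equivalent to the injectivity you are trying to establish, and saying that it ``follows from a Kato/Bloch--Ogus exact sequence'' does not close the argument: the localization sequence for $H^1(X,\Z/n)$ identifies it with the classes in $H^1(K,\Z/n)$ unramified on $X$, but says nothing about which tuples of restrictions at the points of $X_0$ and $\Xinf$ are realized by such global classes. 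Worse, the one concrete input you invoke at this point --- the reciprocity law for $K$ --- proves the wrong inclusion: it shows that restrictions of global characters land inside the reciprocity-compatible subgroup (equivalently, that $\sigma_X$ is well defined on $\CX$), not that every reciprocity-compatible tuple arises this way. As written, the argument is circular at its key step; the compatibility issues you flag at the end are real but secondary.

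What is actually needed, and what the paper supplies, is (a) the global duality theorem $\fgXab/n = H^1(X,\Z/n)^{\ast} \simeq H^3_c(X,\Z/n(2)) = H^3(\Xbar,j_!\Z/n(2))$ for a curve over a $p$-adic field, which is not a consequence of the local dualities you cite, and (b) the exactness at the middle term of the localization sequence
$$
  H^2(K,\Z/n(2)) \to \bigoplus_{x\in\Xbar_0} H^3_x(\Xbar,j_!\Z/n(2)) \to H^3(\Xbar,j_!\Z/n(2)),
$$
together with the excision/purity identifications $H^3_x(\Xbar,j_!\Z/n(2))\simeq H^1(k(x),\Z/n(1))\simeq K_1(k(x))/n$ for $x\in X_0$ and $H^3_x(\Xbar,j_!\Z/n(2))\simeq H^2(K_x^h,\Z/n(2))\simeq K_2(K_x)/n$ for $x\in\Xinf$. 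Granting these, a direct comparison of the presentation of $\CX/n$ with this sequence already yields $\CX/n\inj H^3(\Xbar,j_!\Z/n(2))\simeq\fgXab/n$, with no need to dualize at all. Note also that the finiteness of $\CX/n$, which you assert in order to dualize, is not known a priori --- it is a consequence of the injection being proved (though exactness of $\mathrm{Hom}(-,\Z/n)$ on $n$-torsion groups makes that finiteness unnecessary for the dualization step).
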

\begin{proof}
(Compare with the proof of \cite{SSaito85b}, Chap.~II, Lem.~5.3.)
  By the duality theorem of \'etale cohomology groups with compact support, 
  we have
  \begin{equation} \label{eq:pi}
    \fgXab/n = H^1(X,\Z/n)^{\ast} \simeq H^3_c(X,\Z/n(2)) = H^3(\Xbar,j_!\Z/n(2)),
  \end{equation} 
  where $j:X\inj \Xbar$  is the open immersion. 
  Let us consider the following diagram:
  $$
   \xymatrix@C=0mm{
    K_2(K)/n \ar[r]\ar[d]^{h^2_n} & \bigoplus_{x\in X_0}K_1(k(x))/n \oplus \bigoplus_{x\in \Xinf} K_2(K_x)/n  \ar[r] \ar[d]^h & \CX/n \ar[r]\ar@{-->}[d] & 0 \\
    H^2(K,\Z/n(2)) \ar[r] & \bigoplus_{x\in \Xbar_0}H^3_x(\Xbar, j_!\Z/n(2))\ar[r]&  H^3(\Xbar,j_!\Z/n(2)).  
    }
  $$
  Here, the horizontal sequences are exact, 
  and the left vertical map $h^2_n$  
  is the isomorphism by the Merkur{\cprime}ev-Suslin theorem \cite{MS82}. 
  The vertical map $h$ is an isomorphism defined as follows:
  For $x\in X_0$, 
  by excision and the purity theorem we have 
  $$
    H^3_x(\Xbar, j_!\Z/n(2)) \simeq H^3_x(X,\Z/n(2)) \simeq H^1(k(x),\Z/n (1)). 
  $$
  Thus, Kummer theory gives an isomorphism 
  \begin{equation}\label{eq:Kummer}
    K_1(k(x))/n \isom H^1(k(x),\Z/n (1)) \isom H^3(\Xbar, j_!\Z/n(2)).
  \end{equation}  
  For $x\in \Xinf$, 
  let $\OXbarxh$ be the henselization of $\OXbarx$, 
  $K_x^h$ the field of fractions of $\OXbarxh$, 
  and 
  $j_x:\Spec(K_x^h) \inj \Spec(\OXbarxh)$ the inclusion. 
  By excision and Proposition 1.1 in \cite{Milne06}, we have
  $$
    H^3_x(\Xbar, j_!\Z/n(2)) 
      \simeq H^3_x(\Spec(\OXbarxh), j_{x!}\Z/n(2)) \simeq H^2(K_x^h, \Z/n(2)).
  $$ 
  The Merkur{\cprime}ev-Suslin theorem 
  gives an isomorphism  
  $$
    K_2(K_x)/n \isom K_2(K_x^h)/n \isom H^2(K_x^h, \Z/n(2)) \isom H^3_x(\Xbar,j_!\Z/n(2)). 
  $$
  By composing this and (\ref{eq:Kummer}), 
  the isomorphism $h$ is defined. 
  From the above diagram and (\ref{eq:pi}), we obtain an injection 
  $$
    \CX/n \inj H^3(\Xbar,j_!\Z/n(2)) \isom \fgXab/n
  $$
  which is nothing other than the map $\sigma_X$  modulo $n$.
\end{proof}

\begin{remark}
  Q.~Tian \cite{Tian98} 
  established a similar result 
   by using relative $K$-groups $SK_1(\Xbar,D)$, 
  where $D := \Xbar\ssm X$ is the reduced Weil divisor on $\Xbar$. 
  However, it seems that 
  the theorem (\opcit, Th.~3.11) 
  corresponding to the lemma above 
  is not proved completely.  
\end{remark}


\providecommand{\bysame}{\leavevmode\hbox to3em{\hrulefill}\thinspace}
\providecommand{\MR}{\relax\ifhmode\unskip\space\fi MR }
\providecommand{\MRhref}[2]{%
  \href{http://www.ams.org/mathscinet-getitem?mr=#1}{#2}
}
\providecommand{\href}[2]{#2}


\vspace{0.5cm}

\noindent
 Toshiro Hiranouchi \\
 Graduate School of Mathematics \\
 Kyushu University \\
 6-10-1, Hakozaki, Higashiku, Fukuoka-city, 812-8581
 Japan \\
JSPS Research Fellow, \\
{\tt hiranouchi@math.kyushu-u.ac.jp}

\end{document}